\documentclass[12pt]{amsart}

\usepackage{amsthm,amssymb,amsmath,amscd}
\usepackage{amsthm}
\usepackage{pgf, tikz}
\usepackage{graphicx,subcaption}
\usepackage{verbatim}
\usepackage{todonotes}


\theoremstyle{definition}
\newtheorem{definition}{Definition}

\theoremstyle{plain}
\newtheorem{theorem}[definition]{Theorem}
{Lemma}
{Theorem}
\newtheorem{lemma}[definition]{Lemma}

\newtheorem{claim}[definition]{Claim}
{Conjecture}

\theoremstyle{remark}

\newcommand{\pp}{P^3_3}

\hyphenation{e-li-mi-nate essen-tia-lly corres-pon-ding}

\title[The multipartite Ramsey number for the 3-path]
{The multipartite Ramsey number\\ for the 3-path of length three}

\author{Tomasz \L{}uczak}
\address{Adam Mickiewicz University\\
Faculty of Mathematics and Computer Science\\
Umultowska 87,
 61-614 Pozna\'n, Poland}
\email{\tt tomasz@amu.edu.pl}
\author{Joanna Polcyn}
\address{Adam Mickiewicz University\\
Faculty of Mathematics and Computer Science\\
Umultowska 87,
 61-614 Pozna\'n, Poland}
\email{\tt joaska@amu.edu.pl}
\thanks{The first author partially supported by NCN grant 2012/06/A/ST1/00261.}

\keywords {Ramsey number, hypergraphs, paths}

\subjclass[2010]{Primary: 05D10, secondary: 05C38, 05C55, 05C65. }

\begin{document}

\maketitle

\begin{abstract}
We study the Ramsey number  for the 3-path of length three and $n$ colors
and show that $R(\pp;n)\le \lambda_0 n+7\sqrt{n}$, for some explicit constant $\lambda_0=1.97466\dots$.
\end{abstract}


\section{Introduction}

Let $\pp$ be the 3-uniform hypergraph with the set of vertices $\{a,b,c$, $d,e,f,g\}$ and the set of edges $\{\{a,b,c\},\{c,d,e\},\{e,f,g\}\}$. The Ramsey  number $R(\pp;n)$ is the smallest integer $N$ such that any coloring of the edges of the complete 3-uniform hypergraph $K_N^3$ 
on $N$ vertices 
with $n$ colors leads to a monochromatic copy of $\pp$. It is easy to see 
that $R(\pp;n)\ge n+6$ (see \cite{GR,J}) and it is believed that this lower bound is sharp, i.e. that 
$R(\pp;n)=n+6$. However, so far  this conjecture has been confirmed only for  $n\le 10$ (see \cite{J,JPR,P,PR}).
On the other hand, from the fact that for $N\ge 8$ each $\pp$-free 3-uniform hypergraph $H$ on $N$ vertices satisfies
\begin{equation}\label{ex}
|H|\le \binom {N-1}{2},
\end{equation}
(see \cite{FJS} and~\cite{JPRt}), it 
follows that     
\begin{equation*}\label{3n}
R(n;\pp) \le 3n.
\end{equation*}
In \cite{LP}  the authors of this note improved the above upper bound  to 
\begin{equation}\label{2n}
R(n;\pp) \le 2n +\sqrt{18n+1}+2.
\end{equation}
Our argument relied on the fact that for 2-graphs the analogous multicolored Ramsey number for a `usual' 2-path of length three is know to be $2n+O(1)$, where the small hidden 
constant $O(1)$ depends only on divisibility of $n$ by 3 (see \cite{I}). Thus, it seemed the method we used in \cite{LP} could not be applied directly to get an upper bound better than 
$(\lambda +o(1))n$, for $\lambda <2$. 

The main result of this note is to match our previous approach with later results from \cite{LPp} and get the following estimate for $R(\pp;n)$.

\begin{theorem}\label{main}
Let 
\begin{equation*}\label{eqf}
f(\gamma)=(\gamma^3-3\gamma^2+6\gamma - 6)^2-72\gamma (2-\gamma)(\gamma-1)^2.
\end{equation*}
and let 
$\lambda_0=1.97466\dots$ be the solution to the equation $f(\gamma)=0$, such that
$f(\gamma)> 0$ whenever  $\gamma\in (\lambda_0, 2)$.  
Then 
	$$
	R(\pp;n) \le\lambda_0 n+7\sqrt{n}.
	$$
\end{theorem}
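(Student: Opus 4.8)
The plan is to bound $R(\pp;n)$ by analyzing a putative $\pp$-free $n$-coloring of $K_N^3$ on $N=\lceil\lambda_0 n+7\sqrt{n}\rceil$ vertices and deriving a contradiction. The starting point is the observation, exploited already in \cite{LP}, that each color class $H_i$ is itself a $\pp$-free 3-uniform hypergraph, so by the extremal bound~\eqref{ex} we have $|H_i|\le\binom{N-1}{2}$ for every $i$. Summing over the $n$ colors gives $\binom N3\le n\binom{N-1}2$, which is exactly the crude $3n$ estimate; the whole difficulty is that this counting is far too lossy, since a single color can attain $\binom{N-1}2$ edges only by concentrating almost all its edges on the stars through one vertex, and these extremal configurations cannot all coexist. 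So the real work is a \emph{stability} analysis: I would first classify, using the structural results behind~\eqref{ex} from \cite{FJS,JPRt} and the refinements in \cite{LPp}, what a color class looks like when it is close to extremal, and then show that the near-extremal classes are forced into incompatible structures.

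Concretely, I would fix a threshold and split the colors into \emph{heavy} ones (those with more than some $\beta\binom N2$ edges, close to a star) and \emph{light} ones. For a heavy color the structure theorem should give a distinguished vertex $v_i$ and show that $H_i$ is essentially contained in the link-neighborhood of $v_i$ together with a bounded-density remainder; this is where the parameter $\gamma$ in the definition of $f$ enters, measuring the relative size of the star part. The key quantitative input from \cite{LPp} should be a sharper upper bound on $|H_i|$ expressed as a function of how ``star-like'' the color is, so that the trade-off between many nearly-disjoint stars and the total edge count $\binom N3$ becomes governed by the optimization of a single-variable function. The polynomial $f(\gamma)$ and the constant $\lambda_0$ are presumably the output of exactly this optimization: $\lambda_0$ is the smallest ratio $N/n$ for which the competing constraints can still be satisfied, and $f(\gamma)>0$ on $(\lambda_0,2)$ encodes that above $\lambda_0$ no feasible assignment of edges to colors exists.

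The main steps, in order, are therefore: (i) reduce to studying the heavy colors and their associated vertices $v_i$, bounding the contribution of light colors by a term absorbed into the $7\sqrt n$ slack; (ii) invoke the stability/structure result of \cite{LPp} to write each heavy $|H_i|$ as a function of its star-parameter $\gamma_i$ up to lower-order error; (iii) sum these bounds and compare against $\binom N3$, obtaining an inequality that, after dividing through by $n^2$ and setting $\gamma=N/n$, reduces to the sign condition $f(\gamma)\le 0$; (iv) conclude that $\gamma\le\lambda_0$, i.e. $N\le\lambda_0 n+O(\sqrt n)$, and pin down the $7\sqrt n$ error term by tracking the lower-order losses through the counting. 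The square-root error in the statement strongly suggests that the error analysis is itself delicate and must be done carefully rather than absorbed into an $O(\sqrt n)$.

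I expect the main obstacle to be step (ii): controlling the interaction between distinct heavy colors. It is not enough to bound each $|H_i|$ in isolation, because the extremal configurations for different colors compete for the same triples, and a naive sum of the individual maxima would reproduce the weak bound. The crux is a joint argument showing that the vertices $v_i$ associated to heavy colors must be essentially distinct and that their stars can overlap only in a controlled way, so that the total edge count genuinely forces $f(\gamma)\le 0$. Making this overlap control tight enough to yield the precise root $\lambda_0$ of $f$, rather than merely some constant below $2$, is where the precise form of $f(\gamma)$ and the optimization defining $\lambda_0$ must be matched against the combinatorial structure, and I anticipate this is where most of the technical effort in the full proof will go.
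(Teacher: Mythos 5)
You have correctly identified the two anchors of the argument --- that a color class can be near-extremal only by being star-like (via the structural results of \cite{LPp}, which in the paper take the form of the decomposition Lemma~\ref{podzial} and the consequence that every ``rich'' color class contains a huge star in its $H_S$-part), and that the real difficulty is that the near-extremal stars of different colors compete for the same triples. But your plan has a genuine gap precisely at the point you flag as the crux: you propose no mechanism for controlling that competition, and the mechanism the paper uses is not the one your outline suggests. Your steps (ii)--(iii) envisage summing refined upper bounds on the individual $|H_i|$ against the single count $\binom{N}{3}$; a count of \emph{triples} alone, however refined per color, cannot get below $2n+o(n)$, because $n$ pairwise ``compatible'' full stars already exhaust $\binom N3$ at $N\approx 2n$. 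The paper instead runs a \emph{second, independent double count over pairs}: for each color it forms an auxiliary graph $G_i$ of pairs lying in at least two hyperedges of the (trimmed) color class, observes that the structure of Lemma~\ref{podzial} forces each $G_i$ to be a forest of stars (hence $|G_i|<N$), and then distributes the $\binom N2$ pairs among the colors via a weight function that counts \emph{private} edges (pairs claimed by only one color) fully and \emph{public} ones half. Claim~\ref{cl2} shows that for a rich color almost all of its weight must come from private edges forming a single star, and since private edges of distinct colors are distinct, the union of $|I|$ such stars has at most $\binom{|I|}2+|I|(N-|I|)$ edges. It is the interplay of the two counts --- the triple count yielding the lower bound $|I|\ge\beta n$ of Claim~\ref{cl1}, and the pair count yielding $|I|^2/4\le 2nN-2\binom N2+O(n^{3/2})$ --- that produces the inequality $f(\gamma)<0$; the polynomial $f$ is not the outcome of a single-variable optimization of star-parameters as you surmise, but the elimination of $|I|$ between these two independent inequalities.

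So the verdict is: right framework, right identification of the obstacle, but the decisive idea (the pair-count via the forests $G_i$ together with the private/public bookkeeping) is missing, and without it the plan cannot be completed along the lines you describe. If you want to repair your outline, replace step (iii) by two separate counts --- one of triples to lower-bound the number of rich colors, one of pairs to upper-bound it via the edge-disjointness of the private stars --- and derive $f(\gamma)<0$ from their combination.
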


%
%

\section{Proof of Theorem \ref{main}}

Our argument is based on the following decomposition lemma proved in \cite{LPp}. 
Before we state it we need some definitions. We call a 3-graph $H$ {\em quasi-bipartite} if one can partition its set of vertices into three sets: $X=\{x_1,x_2,\dots, x_s\}$, $Y=\{y_1,y_2,\dots, y_s\}$, and 
$Z=\{z_1,z_2,\dots, z_t\}$ in such a way that all the edges of $H$  can be written as 
$\{x_i,y_i,z_j\}$ for some $i=1,2,\dots, s$, and $j=1,2,\dots, t$. By a {\em star} with center $v$ we mean any 3-graph in which each edge contains $v$. Then the following holds.

\begin{lemma}
\label{podzial}
	For any $\pp$-free 3-graph $H$ there exists a partition of its set of vertices
	$V=V_R\cup V_T\cup V_S$, such that subhypergraphs of $H$ defined as $H_R=\{h\in H:h\cap V_R\neq \emptyset\}$, $H_T=H[V_T]$ and $H_S=H\setminus(H_R\cap H_T)=\{h\in H[V\setminus V_R]: h\cap V_S\neq\emptyset\}$ satisfy the following three conditions:
	\begin{enumerate}
		\item[(i)]  $|H_R|\le 6|V_R|$,   
		\item[(ii)] $H_T$ is quasi-bipartite and $|H_T|\le |V_T|^2/8$,  
		\item[(iii)]  $H_S$ is a family of disjoint stars such that centers of these stars are in $V_T$ whereas all other vertices are in $V_S$, and $|H_S|\le {|V_S|\choose 2}$.
	\end{enumerate}
\end{lemma}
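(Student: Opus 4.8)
The plan is to read the partition off from the links of $H$. Write $L_v$ for the link of a vertex $v$ --- the graph on $V\setminus\{v\}$ whose edges are the pairs $\{x,y\}$ with $\{v,x,y\}\in H$ --- and $d(x,y)$ for the codegree of a pair. Everything rests on one consequence of $\pp$-freeness: if $\{x_1,y_1\}$ and $\{x_2,y_2\}$ are two disjoint edges of some link $L_v$ and an edge $h\in H$ meets $\{x_2,y_2\}$ in exactly one vertex while avoiding $\{v,x_1,y_1\}$, then $\{v,x_1,y_1\}$, $\{v,x_2,y_2\}$ and $h$ form a copy of $\pp$. Hence, whenever a link carries a matching of size two, its four matched endpoints are \emph{trapped}: any further edge through them must return either to the centre $v$ or to the opposite matching edge.

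Since a graph with no two disjoint edges is a star or a triangle, each link is of one of three kinds: a star, in which a single pair $\{v,u\}$ carries almost all the edges at $v$; a triangle, forcing $\deg_H(v)=3$; or a graph with a matching of size two, making $v$ the centre of a loose star with trapped leaves. The star links supply the quasi-bipartite part. I would collect the pairs whose codegree exceeds a fixed absolute constant and use the trapping property to show that the two endpoints of such a heavy pair occur only \emph{together}, so that the heavy pairs form a matching $\{x_i,y_i\}$ whose members split consistently into two classes $X,Y$ while their shared completions make up $Z$. Putting these vertices into $V_T$ renders $H_T=H[V_T]$ quasi-bipartite, and (ii) is then the elementary bound that a quasi-bipartite $3$-graph on $m$ vertices has at most $m^2/8$ edges (optimised at $|X|=|Y|=m/4$, $|Z|=m/2$).

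The edges not absorbed by a heavy pair have all codegrees bounded, and for them the trapping mechanism applies in the other direction: two such edges sharing a single vertex, or a loose star met by a stray light edge, complete to a copy of $\pp$ unless all the edges involved pass through one common vertex. This forces the light edges to organise into vertex-disjoint stars. I place their centres in $V_T$ and their leaves in $V_S$, so that $H_S$ is exactly a family of disjoint stars, and (iii) is immediate from $\sum_i\binom{|S_i|}{2}\le\binom{|V_S|}{2}$. (As a check, the extremal single star of~\eqref{ex} sits entirely inside $H_S$, with one centre in $V_T$ and all leaves in $V_S$.)

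The leftover vertices --- those of triangle type, and those claimed at once as spine endpoint, completion vertex and star leaf, or where two candidate stars would overlap --- I would sweep into $V_R$. A local count, bounding via $\pp$-freeness the edges that can survive at such a conflicted vertex, limits the edges meeting $V_R$ to $6|V_R|$ and yields (i). The main obstacle is exactly this reconciliation. The roles ``completion vertex of a quasi-bipartite book'' and ``centre of a loose star'' are locally indistinguishable, since both create a matching in the link; they are separated only by whether the matched leaves are rigidly paired or genuinely independent. Disentangling these clashes globally, and proving that they touch only linearly many edges so that $V_R$ stays cheap with the sharp constant $6$, is where the $\pp$-free hypothesis must be used in full, well beyond a single application of the trapping observation.
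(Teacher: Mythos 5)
First, a point of reference: the paper does not prove this lemma at all --- it is imported verbatim from \cite{LPp} (``proved in \cite{LPp}''), so your attempt can only be measured against that external proof, and on its own terms. Your local analysis is sound as far as it goes: the trapping observation is a correct application of $\pp$-freeness (the only configuration it lets through an endpoint of a link-matching, besides edges returning to $v$ or to the opposite matched pair, is an edge containing \emph{both} $x_2$ and $y_2$, which is exactly the quasi-bipartite pattern $\{x_i,y_i,z_j\}$, so your case split is right); the trichotomy ``star, triangle, or matching of size two'' for links is the standard fact about graphs with matching number one; and both counting statements are correct --- a quasi-bipartite $3$-graph on $m$ vertices has at most $s(m-2s)\le m^2/8$ edges, and disjoint stars with leaves in $V_S$ satisfy $\sum_i\binom{k_i}{2}\le\binom{|V_S|}{2}$.

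Nevertheless this is a programme, not a proof, and the gap sits exactly where the lemma lives, in part (i). Two steps are asserted without argument. First, that the endpoints of heavy pairs ``occur only together'' and split consistently into classes $X$ and $Y$: trapping is a statement about one link at a time, centered at one vertex $v$, whereas the conclusion you need is global --- every edge of $H\setminus H_R$ must either lie inside $V_T$ in quasi-bipartite position or be a star edge with center in $V_T$ and two leaves in $V_S$, and a single vertex can simultaneously play the roles of heavy-pair endpoint, completion vertex $z_j$, and star leaf, as you yourself note. Second, and decisively, the bound $|H_R|\le 6|V_R|$: you explicitly defer it (``is where the $\pp$-free hypothesis must be used in full''), but a priori a conflicted vertex can lie in $\Theta(N)$ edges of $H$, so one cannot simply ``sweep'' conflicts into $V_R$ and hope the damage is linear. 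What is required is an iterative removal scheme with a charging argument in which each vertex placed in $V_R$ pays for only a bounded number (six) of destroyed edges, and nothing in the sketch supplies this; it is precisely the nontrivial content of the proof in \cite{LPp}. So the attempt correctly reconstructs the easy boundary of the lemma --- the local link structure and the bounds in (ii) and (iii) --- while leaving its core, the existence of a cheap exceptional set $V_R$ reconciling the two locally indistinguishable structures, unproved.
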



The following lemma is a direct  consequence of the above result.

\begin{lemma}\label{bigstar}
Let $H$ be a  $\pp$-free 3-graph $H$ on $N\ge 95$ such that for some $s$, 
$(N+3)/2\le s\le N-46$, we have	
	$$
	|H|\ge {s-1\choose 2}+{N-s\choose 2},
	$$
	and let $H=H_R\cup H_T\cup H_S$ be a decomposition of $H$ as described in Lemma~\ref{podzial}.
Then $H_S$ contains a star on  at least $s$ vertices. 
\end{lemma}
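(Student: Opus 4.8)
The plan is to argue by contradiction from the decomposition $H=H_R\cup H_T\cup H_S$ of Lemma~\ref{podzial}. Write $r=|V_R|$, $t=|V_T|$, $u=|V_S|$, so that $r+t+u=N$, and record the two cheap bounds $|H_R|\le 6r$ and $|H_T|\le t^2/8$ from parts (i)--(ii). Suppose, contrary to the claim, that every star of $H_S$ spans at most $s-1$ vertices, i.e.\ has at most $s-2$ leaves. Since the stars are vertex-disjoint with centres in $V_T$ and all leaves in $V_S$, their leaf-sets partition a subset of $V_S$ into blocks of size at most $s-2$. The hypothesis $s\ge (N+3)/2$ gives $s-2\ge (N-1)/2$, so at most two blocks can attain the maximal size; by convexity of $\binom{\cdot}{2}$ one then gets $|H_S|\le \binom{s-2}{2}+\binom{u-(s-2)}{2}$ whenever $u\ge s-2$ and at least two centres are available ($t\ge2$), while $u\le s-2$ gives the trivial $|H_S|\le\binom{u}{2}$ and $t\le1$ gives $|H_S|\le\binom{s-2}{2}$.

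Next I would collapse the edge count into a single inequality. Putting $q=N-s$ and $w=r+t$ (so $u=N-w$), our upper bound on $|H|$, subtracted from the target $\binom{s-1}{2}+\binom{N-s}{2}$, equals, after using $\binom{s-1}{2}-\binom{s-2}{2}=s-2$ and the identity $\binom{q}{2}-\binom{q+2-w}{2}=(w-2)(2q+1-w)/2$,
\[
\Phi(r,t) := (s-2) + \frac{(w-2)(2q+1-w)}{2} - 6r - \frac{t^2}{8},
\]
so the goal becomes $\Phi(r,t)>0$ on the triangle $r\ge0$, $t\ge2$, $w=r+t\le q+1$ (the last inequality being the regime $u\ge s-1$), since $\Phi>0$ would force $|H|<\binom{s-1}{2}+\binom{N-s}{2}$, contradicting the assumed lower bound.

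The decisive observation is that $\Phi$ is concave: $-t^2/8$ is concave and $(w-2)(2q+1-w)/2$ is a downward parabola in $w=r+t$, while the remaining terms are linear. A concave function attains its minimum over a polytope at a vertex, so it suffices to check the three corners $(0,2)$, $(0,q+1)$, $(q-1,2)$. At $(0,2)$ one gets $\Phi=(s-2)-\tfrac12>0$; at $(0,q+1)$ one gets $\Phi=(s-2)+(3q^2-6q-1)/8>0$; and at $(q-1,2)$ one gets $\Phi=(s-2)+\binom{q}{2}-6q+6-\tfrac12>0$, where $q\ge46$ (from $s\le N-46$) is exactly what makes $\binom{q}{2}$ beat $6q$ and $s\ge (N+3)/2$ keeps the constants positive. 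It remains to dispose of the degenerate regimes in which the two-block bound is unavailable: when $u\le s-2$ no star can reach $s$ vertices anyway, $|H|\le t^2/8+\binom{u}{2}$ is maximised at $r=0$ and (by convexity in $u$) at an endpoint $u\in\{0,s-2\}$, both of which fall strictly below the target once $N\ge95$; and when $t\le1$ the estimate $6r+\binom{s-2}{2}$ with $r\le q+1$ is again below the target because $\binom{q}{2}\gg 6q$ for $q\ge46$. Each alternative contradicts $|H|\ge\binom{s-1}{2}+\binom{N-s}{2}$, so some star of $H_S$ must span at least $s$ vertices.

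I expect the main obstacle to be the quadratic term $t^2/8$, which is genuinely large when $V_T$ is a constant fraction of $V$ and is the reason the naive bound $|H_S|\le\binom{u}{2}$ does not suffice; the resolution is that a large $V_T$ forces a small $V_S$, and this trade-off is precisely what the concavity of $\Phi$ packages into the three vertex checks. A secondary nuisance is the bookkeeping at the boundary of the star bound --- the two-block estimate silently assumes two usable centres and a leaf reservoir of size at least $s-2$ --- which is what forces the separate but routine treatment of the degenerate regimes $t\le1$ and $u\le s-2$.
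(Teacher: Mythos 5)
Your proof is correct and follows essentially the same route as the paper's: assume every star of $H_S$ spans at most $s-1$ vertices, bound $|H_S|$ by the two-star configuration $\binom{s-2}{2}+\binom{|V_S|-s+2}{2}$, and check that adding $6|V_R|+|V_T|^2/8$ still falls short of $\binom{s-1}{2}+\binom{N-s}{2}$. The only difference is in the bookkeeping: you verify the final inequality by concavity and vertex enumeration over the $(r,t)$-triangle (treating the degenerate regimes separately), whereas the paper first rules out $|V_S|\le s-2$ and then absorbs $6|V_R|+|V_T|^2/8+\binom{|V_S|-s+2}{2}$ directly into $\binom{N-s+1}{2}$.
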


\begin{proof}
	Let $V=V_R\cup V_T\cup V_S$ be a partition of the set of vertices $H$ given by Lemma \ref{podzial}. 
Note that $|V_S|\ge s-1$, since otherwise
\begin{align*}
|H|&\le 6|V_R|+\frac{|V_T|^2}8+{|V_S|\choose 2} 
\le{|V_S|\choose 2}+ \frac{(N-|V_S|)^2}8\\
	&\le{s-2\choose 2}+\frac{(N-s+2)^2}8
	<{s-1\choose 2}+{N-s\choose 2}.
	\end{align*}
	Recall that $H_S$ is a collection of disjoint stars. Suppose that the largest of these stars consists of at most $s-1 > N/2$ vertices. Then one can easily verify that the number of edges in $H_S$ is maximised if $H_S$ consists of  two stars on $s-1$ and $|V_S|-(s-1)+2$ vertices respectively.  
Consequently
		\begin{align*}
			|H|&\le 6|V_R|+\frac{|V_T|^2}{8}+{s-2\choose 2}+{|V_S|-s+2\choose 2}\\&\le {s-2\choose 2}+{N-s+1\choose 2}
<{s-1\choose 2}+{N-s\choose 2},
		\end{align*}
again contradicting the fact that $|H|\ge {s-1\choose 2}+{N-s\choose 2}$. 
Thus, $H_S$ contains a star on at least $s$ vertices.
\end{proof}

\begin{proof}[Proof of Theorem~\ref{main}]
We show that if for given integers $N$ and $n$ one can find a coloring of edges of $K^3_N$  by $n$ colors without monochromatic copies of $\pp$, then $\gamma=(N-7\sqrt n)/n<\lambda_0$
where $\lambda_0$ is defined in Theorem \ref{main}.
Some parts of our argument are quite technical and, since we aim to prove the statement for every $n$, we start with few remarks which makes our future computations a bit easier. 

Note that since $\lambda_0>1.97$, 
we may assume that   $\gamma>1.9$. 
Moreover, due to  (\ref{2n}), it is enough to consider   $\gamma < 2$. 
Finally, since $R(n; \pp)\le 3n$ we can restrict to the case $n\ge 41$ (and hence $N>122$) because otherwise $3n<1.9n+7\sqrt{n}$.

Thus, for  $n\ge 41$ and $1.9<\gamma<2$, let us consider a coloring of edges of $K^3_N$, $N=\gamma n+7\sqrt{n}$, by $n$ colors without monochromatic copies of~$\pp$, 
and let $H_i$  denote the $\pp$-free hypergraph 
generated by the $i$-th color.
 
 We say that the $i$th color is \emph{rich} if
 \begin{equation}\label{rich}
 |H_i|\ge {n+6\sqrt{n}-1\choose 2}+{N-n-6\sqrt{n}\choose 2}.
 \end{equation}

\begin{claim}\label{cl1}
At least $\beta n$ colors are rich, where 
$$\beta=\frac{\gamma^3-3\gamma^2+6\gamma -6}{6(\gamma-1)}\,.$$
\end{claim}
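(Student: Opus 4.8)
The plan is to prove the claim by a single global double-counting of the edges of $K^3_N$, using only the extremal bound (\ref{ex}) together with the definition (\ref{rich}) of richness. Since every edge of $K^3_N$ gets exactly one color, $\sum_{i=1}^n|H_i|=\binom{N}{3}$. Each color class is $\pp$-free, so by (\ref{ex}) (applicable since $N>122\ge 8$) we have $|H_i|\le\binom{N-1}{2}$ for every $i$. Writing $R$ for the right-hand side of (\ref{rich}) and letting $r$ be the number of rich colors, the $n-r$ non-rich colors each contribute fewer than $R$ edges, so
$$
\binom{N}{3}=\sum_{i=1}^n|H_i|\le r\binom{N-1}{2}+(n-r)R.
$$

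First I would rearrange this into a lower bound on $r$. Since $\gamma>1$, a short estimate shows $\binom{N-1}{2}-R>0$ (its leading term is $(\gamma-1)n^2$), so the displayed inequality yields
$$
r\ge\frac{\binom{N}{3}-nR}{\binom{N-1}{2}-R}.
$$
It thus suffices to show the right-hand side is at least $\beta n$, which — the denominator being positive — is equivalent to
$$
\binom{N}{3}\ge\beta n\binom{N-1}{2}+(1-\beta)\,nR.
$$
Substituting $N=\gamma n+7\sqrt n$ and $R=\binom{n+6\sqrt n-1}{2}+\binom{(\gamma-1)n+\sqrt n}{2}$ turns this into a polynomial inequality in $\sqrt n$ whose coefficients depend on $\gamma$, which I would then verify order by order.

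The coefficients of the leading term $n^3$ agree identically: on the left it is $\gamma^3/6$, while on the right one computes $\tfrac12\big[\gamma^2-2\gamma+2+2\beta(\gamma-1)\big]$, and this collapses to $\gamma^3/6$ precisely because $\beta=(\gamma^3-3\gamma^2+6\gamma-6)/(6(\gamma-1))$ — this is exactly what pins down the value of $\beta$. One step down, the coefficient of $n^{5/2}$ on the left is $7\gamma^2/2$, while on the right it is $7\beta\gamma+(1-\beta)(5+\gamma)$; for $1.9<\gamma<2$ the former strictly exceeds the latter (the gap is roughly $4.5$ near $\gamma=\lambda_0$ and stays positive across the whole interval). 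This positive $n^{5/2}$ slack is what forces the inequality, and it is manufactured precisely by the choices $7\sqrt n$ in $N$ and $6\sqrt n$ in (\ref{rich}).

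The main obstacle is the honest bookkeeping of the remaining lower-order terms: one must check that the $O(n^2)$ corrections contributed by the three binomial coefficients cannot cancel the $\Theta(n^{5/2})$ slack for any admissible pair $(n,\gamma)$. Because $\sqrt n\ge\sqrt{41}>6.4$, the $n^{5/2}$ gap dominates the $O(n^2)$ remainder uniformly on $n\ge 41$ and $1.9<\gamma<2$, so the inequality — and hence $r\ge\beta n$ — follows. The only real care required is to keep every constant explicit rather than asymptotic, which the built-in square-root slack makes routine.
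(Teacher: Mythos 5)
Your argument is essentially the paper's: the same global double count $\binom{N}{3}\le r\binom{N-1}{2}+(n-r)R$ combining the Tur\'an bound (\ref{ex}) for rich colors with the threshold (\ref{rich}) for the rest, followed by the same order-by-order comparison after substituting $N=\gamma n+7\sqrt n$; the paper merely phrases it as a contradiction (assuming the number of rich colors is below $\beta n$) rather than solving for $r$ directly. Your $n^3$- and $n^{5/2}$-coefficient computations agree with the paper's, and the lower-order bookkeeping you defer is exactly the explicit estimate the paper carries out, so the proposal is correct.
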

\begin{proof}
Due to technical calculations it will be convenient to show the statement by contradiction. 
 Denote the number of rich colors by $\beta n$ and assume that 
	\begin{equation}\label{bet}
	\beta<\frac{\gamma^3-3\gamma^2+6\gamma -6}{6(\gamma-1)}
	<\frac 13
	\end{equation}
	Since by (\ref{ex}), for each $i\in [n]$ we have $|H_i|\le {N-1\choose 2}$,  
$$
\binom {N}3< \beta n \binom {N-1}2+ n(1-\beta)\left(  {n+6\sqrt{n}-1\choose 2}+{N-n-6\sqrt{n}\choose 2} \right)\,.$$
Now substituting $N=\gamma n+7\sqrt n$ and putting all leading terms on the left hand side of the equation we arrive at
\begin{align*}
[(\gamma^3-3\gamma^2+6\gamma -6)&-\beta (6\gamma-6)]n^3<
[\beta(36\gamma-30)-(21\gamma^2-6\gamma -30)]n^{5/2}\\
&+[(\beta (42-6\gamma)-(150\gamma-3\gamma^2-105)]n^2\\
&-[6\beta+400-42\gamma]n^{3/2}-[2\gamma-153]n-14\sqrt{n}.
\end{align*}
But for $1.9<\gamma <2$ and $0\le \beta <1/3$ the right hand side of the above equation is smaller than
$-19n^{5/2}-157n^2-316n^{3/2}+150n-14\sqrt n$ which, in turn, is negative for all natural~$n$. Consequently,
\[
[(\gamma^3-3\gamma^2+6\gamma -6)-\beta (6\gamma-6)]n^3<0,
\]
and thus
\[
\beta>\frac{\gamma^3-3\gamma^2+6\gamma -6}{6(\gamma-1)},
\]
contradicting (\ref{bet}).
\end{proof}

Recall that each $H_i$ is $\pp$-free and so one can apply to it Lemma \ref{podzial} to get a decomposition of  $H_i$ into three graphs, $H^i_R\cup H^i_T\cup H^i_S$. 
For all $i\in [n]$ let us `uncolor' all the triples in $H^i_R$ and mark them `blank',
and set $\hat H_i=H^i_T\cup H^i_S$. 
Let $G_i$ be the graph whose edges are pairs which belong to at 
least two hyperedges of $\hat H_i$ and fewer than $6\sqrt{n}$ blank triples. 
Note that, because of the structure of $\hat H_i$, $G_i$ is a forest consisting of stars.

We say that an edge of $G_i$ is \emph{private} if 
it is not an edge of any other graph $G_j$, $j\neq i$, and  \emph{public}
otherwise. By  $e_i$ and $e'_i$ we denote the number of private and public edges of $G_i$, respectively.
\emph{The weight} $w(G_i)$ of $G_i$ is defined as 
$$w(G_i)=e_i+e'_i/2.$$
Since $G_i$ is a forest we have also
\begin{equation}\label{forest}
w(G_i)\le e_i+e_i'<N.
\end{equation}

Note that at most 
$$
\frac{3\sum_{i=1}^{n}|H^i_R|}{6\sqrt{n}}\le \frac{3\sum_{i=1}^{n}6N}{6\sqrt{n}}=3\sqrt{n}N=3\sqrt{n}(\gamma n+7\sqrt{n})<6n^{3/2}+21n
$$
pairs of $K^2_N$ belong to at least $6\sqrt{n}$ blank triples. 
Since by the pigeonhole principle all pairs which are contained in fewer than $6\sqrt{n}$ blank
triples are edges of at least one $G_i$, we have 
\begin{equation}\label{eq1}
\binom {N}2-6n^{3/2}-21n \le \sum_{i\in [n]} w(G_i).
\end{equation}

Let us make the following easy yet crucial observation.


\begin{claim}\label{cl2}
If a color $i$ is rich, 
then  $G_i$ contains more than $2w(G_i)-N$ private edges. 
All of them form a star. 
\end{claim}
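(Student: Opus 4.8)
The plan is to treat the two assertions separately, noting that only the second one uses richness. The bound on the number of private edges is immediate and holds for every colour: since $G_i$ is a forest, by~(\ref{forest}) we have $e_i+e_i'<N$, whence
$e_i-\bigl(2w(G_i)-N\bigr)=e_i-(2e_i+e_i'-N)=N-(e_i+e_i')>0$. Thus $G_i$ always contains more than $2w(G_i)-N$ private edges, and the whole content of the claim lies in proving that these private edges form a star.

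For the structural part I would first show that every private edge is \emph{heavy}. Fix a private edge $\{u,v\}$. Of the $N-2$ triples through $\{u,v\}$ fewer than $6\sqrt n$ are blank, so more than $N-2-6\sqrt n$ of them carry a colour and hence lie in exactly one of the hypergraphs $\hat H_j$. Because $\{u,v\}$ is private it is, for every $j\ne i$, not an edge of $G_j$; since it meets fewer than $6\sqrt n$ blank triples, the only way this can happen is that $\{u,v\}$ lies in at most one triple of $\hat H_j$. Summing over the $n-1$ colours $j\ne i$ leaves more than $N-n-1-6\sqrt n=(\gamma-1)n+\sqrt n-1$ triples of $\{u,v\}$ in $\hat H_i$. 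This is the step where the two error terms $6\sqrt n$ and $\sqrt n$ are spent against the $7\sqrt n$ appearing in $N=\gamma n+7\sqrt n$.

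Next I would read off the structure forced by heaviness. A pair lying in many edges of $\hat H_i=H^i_T\cup H^i_S$ must, by the shape of the quasi-bipartite part and of the disjoint stars, be either a (centre, leaf) pair of a single star of $H^i_S$, or a matched pair $\{x_k,y_k\}$ of $H^i_T$. In the star case one endpoint is a centre whose star carries more than $(\gamma-1)n+\sqrt n$ leaves, so that star has more than $(\gamma-1)n+\sqrt n+1$ vertices. Richness now enters through Lemma~\ref{bigstar}: $H^i_S$ contains a star $T$, with centre $c$ say, on at least $n+6\sqrt n$ vertices. If a private edge sat on a star of $H^i_S$ other than $T$, that star would be vertex-disjoint from $T$, and together they would use more than $\bigl((\gamma-1)n+\sqrt n+1\bigr)+\bigl(n+6\sqrt n\bigr)=N+1$ vertices, which is impossible. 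Hence every star-type private edge lies on $T$ and therefore contains $c$; all such edges then share the vertex $c$ and form a star centred at $c$, as required.

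The main obstacle I anticipate is the quasi-bipartite case, namely a heavy private pair $\{x_k,y_k\}$ living inside $H^i_T$. The same counting shows that two disjoint such pairs, or one such pair together with a star of $H^i_S$ avoiding $c$, would again overrun the $N$ available vertices; the only surviving configuration is the degenerate one in which $c$ itself plays the role of $x_k$ or $y_k$, and this has to be excluded, or absorbed into the star at $c$, by a more careful bookkeeping of which vertices of $V_T$ can simultaneously be a star centre and a quasi-bipartite vertex. Since the relevant vertex counts are tight up to the lower-order $\sqrt n$ terms, this is precisely the point where one must be most careful, and it is the reason the thresholds in the definition of richness and of $G_i$ are calibrated as they are.
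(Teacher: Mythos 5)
Your first paragraph is exactly the paper's argument for the count of private edges (the forest inequality $e_i+e_i'<N$ combined with $w(G_i)=e_i+e_i'/2$), and your overall strategy for the structural part --- Lemma~\ref{bigstar} to produce a star on at least $n+6\sqrt n$ vertices, plus a pigeonhole over the $n-1$ other colours to show that a private pair must lie in many triples of $\hat H_i$ --- is also the paper's. But as written your proof is incomplete: you explicitly leave open the case of a heavy private pair $\{x_k,y_k\}$ sitting inside the quasi-bipartite part $H^i_T$, saying it ``has to be excluded \dots by a more careful bookkeeping.'' Since the claim you are proving is precisely that \emph{all} private edges form a star at the centre $c$ of the big star, an unhandled family of potential private edges disjoint from $c$ is a genuine gap, not a cosmetic one.

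The good news is that the case closes by the very vertex count you already use, because $V_T$ and $V_S$ are disjoint by Lemma~\ref{podzial}. If $\{x_k,y_k\}$ is private, your counting puts it in more than $(\gamma-1)n+\sqrt n-1$ triples of $\hat H_i$, and all of these have the form $\{x_k,y_k,z_j\}$ with $z_j\in Z\subseteq V_T$; hence $|V_T|>(\gamma-1)n+\sqrt n+1$. On the other hand the star $T$ has at least $n+6\sqrt n-1$ leaves, all lying in $V_S$. Since $V_T\cap V_S=\emptyset$, the worry about $c$ coinciding with some $x_k$, $y_k$ or $z_j$ is irrelevant: $N\ge |V_T|+|V_S|>\bigl((\gamma-1)n+\sqrt n+1\bigr)+\bigl(n+6\sqrt n-1\bigr)=N$, a contradiction, so no private edge is of quasi-bipartite type. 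The paper avoids the case split altogether by running the argument in the other direction: any edge of $G_i$ not containing the centre $w$ of $F$ has all its third vertices (whether leaves of a smaller star or elements of $Z\subseteq V_T$) outside the leaf set of $F$, hence lies in fewer than $N-n-6\sqrt n$ triples of $\hat H_i$, hence in at least $n$ triples of $\bigcup_{j\ne i}\hat H_j$, and is therefore public by pigeonhole. That one-line uniform bound is worth adopting, but your route is sound once the quasi-bipartite case is actually carried out.
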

\begin{proof}
Since $G_i$ is a forest we have $e_i+e'_i< N$. Thus,
$$w(G_i)=e_i+e'_i/2< e_i+N/2-e_i/2=(e_i+N)/2,$$ 
and so  $G_i$ contains more than $2w(G_i)-N$ private edges. Note 
also that, by Lemma~\ref{bigstar}, $H_S^i$ contains the unique largest star $F$ on at least  
$n+6\sqrt{n}$ vertices.  Let us denote the center of this star by $w$. Then each 
edge of $G_i$ which does not contain $w$ is clearly contained in fewer than $N-n-6\sqrt{n}$ hyperedges of $\hat H_i$ and so
belongs to  at least $n$ triples of $\bigcup_{j\neq i}\hat H_j$. Thus, 
by the pigeonhole principle, each such edge
 must be public. Consequently, all private edges must contain $w$ and form in  
$G_i$ a large star.
\end{proof}

Let $I$ denote the set of all rich colors. 
As an immediate corollary of Claim~\ref{cl2},  we get the following inequality

\begin{equation*}\label{eq3}
\sum_{i\in I} (2w(G_i)-N)<\sum_{i\in I}e_i\le\binom {|I|}2+|I|(N-|I|))
\end{equation*}
which leads to 
\begin{align*}\label{eqq3}
\sum_{i\in I} w(G_i)&\le N|I|-|I|^2/4-|I|/4\,. 
\end{align*}
Thus, using  (\ref{forest}) and (\ref{eq1}) we get
\begin{equation*}\label{eq4}
\begin{aligned}
\binom {N}2 -6n^{3/2}-21n&\le  \sum_{i\in [n]} w(G_i)=\sum_{i\in I}w(G_i)+\sum_{i\notin I}w(G_i)\\
&< N|I|-|I|^2/4 -|I|/4+\sum_{i\notin I}N\\
&= N|I|-|I|^2/4 -|I|/4+(n-|I|)N\\
&\le nN-|I|^2/4.
\end{aligned}
\end{equation*}
Hence, using the estimate for the size of $I$ given by  Claim \ref{cl1} we arrive at
\begin{equation*}\label{eq6}
\begin{aligned}
&\left(\frac{\gamma^3-3\gamma^2+6\gamma -6}{6(\gamma-1)}\right)^2\frac{n^2}2\le \frac{|I|^2}2 < 2nN-2\binom {N}2 +12n^{3/2}+42n.
\end{aligned}
\end{equation*}
Now substituting $N=\gamma n+7\sqrt n$ and putting all leading terms on the left hand side of the inequality results in the following formula 
\begin{equation*}
\left(\frac{(\gamma^3-3\gamma^2+6\gamma -6)^2}{72(\gamma-1)^2}-\gamma (2-\gamma)\right)n^2< (26-14 \gamma) n^{3/2}+(\gamma-7) n+7 \sqrt n.
\end{equation*}
But for $1.9<\gamma <2$ and $n\ge 2$ we have 
$$(26-14 \gamma) n^{3/2}+(\gamma-7) n+7 \sqrt n  <0,$$ and so
$$
\left(\frac{(\gamma^3-3\gamma^2+6\gamma -6)^2}{72(\gamma-1)^2}-\gamma (2-\gamma)\right)n^2<0.
$$
Consequently, 
\begin{equation*}\label{eq5}
(\gamma^3-3\gamma^2+6\gamma - 6)^2< 72\gamma (2-\gamma)(\gamma-1)^2,
\end{equation*}
which implies that $\gamma$ is smaller than $\lambda_0$ defined in Theorem \ref{main}.
\end{proof}

\bibliographystyle{plain}

\begin{thebibliography}{9}   
		
  \bibitem{FJS} Z.~F\"uredi, T.~Jiang, R.~Seiver, {\sl  Exact solution of the hypergraph Tur\'an
  	problem for $k$-uniform linear paths}, {Combinatorica} 34(3) (2014)  299--322. 	  
\bibitem{GR}
A.~Gy\'{a}rf\'{a}s, G.~Raeisi, {\sl The Ramsey number of loose triangles and quadrangles in hypergraphs}, {Electron.\ J.\ Combin.} 19(2) (2012), \# R30.
\bibitem{I}
 R.W.~Irving, {\sl Generalised Ramsey numbers for small graphs}, {Discrete Math.} 9 (1974) 251--264.
\bibitem{J}
E.~Jackowska, {\sl The 3-color Ramsey number for a 3-uniform loose path of length 3}, 
{Australas.\ J.\ Combin.} 63(2)
(2015) 314--320.
\bibitem{JPRt}
E.~Jackowska, J.~Polcyn, A.~Ruci\'nski, {\sl Tur\'an numbers for 3-uniform linear paths of length 3}, {Electron.\ J.\ Combin.} 23(2) (2016), \#P2.30
\bibitem{JPR} E.~Jackowska, J.~Polcyn, A.~Ruci\'nski, 
{\sl Multicolor Ramsey numbers and restricted Tur\'an
	numbers for the loose 3-uniform path of length three},  {\tt arXiv:1506.03759v1}. 
 \bibitem{LP} T.~{\L}uczak, J.~Polcyn, {\sl  On the multicolor Ramsey number for 3-paths of length three},   {Electron.\ J.\ Combin.} 24(1) (2017), \#P1.27.
 \bibitem{LPp} T.~{\L}uczak, J.~Polcyn, {\sl Paths in hypergraphs: a rescaling phenomenon}, 
 {\tt arXiv:1706.08465}. 
\bibitem{P}
J.~Polcyn, {\sl One more Tur\'an number and  Ramsey number for the loose 3-uniform path of length three}, {Discuss.\ Math.\ Graph Theory} 37 (2017) 443--464.
\bibitem{PR}
J.~Polcyn, A.~Ruci\'nski, {\sl Refined Tur\'an numbers and Ramsey numbers for the loose 3-uniform path of length three}, {Discrete Math.} 340 (2017) 107--118.

\end{thebibliography}

\end{document}